\numberwithin{equation}{section}
\numberwithin{figure}{section}
\theoremstyle{plain}
\newtheorem{thm}{\protect\theoremname}[section]
\theoremstyle{definition}
\newtheorem{rem}[thm]{\protect\remarkname}
\theoremstyle{definition}
\newtheorem{defn}[thm]{\protect\definitionname}
\theoremstyle{plain}
\newtheorem{prop}[thm]{\protect\propositionname}
\theoremstyle{plain}
\newtheorem{lem}[thm]{\protect\lemmaname}
\theoremstyle{plain}
\newtheorem{conjecture}{\protect\conjecturename}
\theoremstyle{plain}
\theoremstyle{definition}
\theoremstyle{definition}
\theoremstyle{definition}
\theoremstyle{definition}
\newtheorem{definition}{Definition}
\newcommand{\R}{\mathbb R}
\newcommand{\Z}{\mathbb Z}
\newcommand{\N}{\mathbb N}
\newcommand{\PP}{\mathbb P}
\def\RP{\R\PP}
\newcommand{\eps}{\varepsilon}
\providecommand{\conjecturename}{Conjecture}
\providecommand{\corollaryname}{Corollary}
\providecommand{\definitionname}{Definition}
\providecommand{\examplename}{Example}
\providecommand{\lemmaname}{Lemma}
\providecommand{\problemname}{Problem}
\providecommand{\propositionname}{Proposition}
\providecommand{\remarkname}{Remark}
\providecommand{\theoremname}{Theorem}
\providecommand{\taskname}{Task}
\def\Ok{{\mathcal O}}
\def\Dh{\dim_{\rm H}}
\def\half{\frac{1}{2}}
\def\Lam{\Lambda}
\newcommand{\gam}{\gamma}
\newcommand{\sig}{\sigma}
\newcommand{\bi}{{\bf i}}
\newcommand{\bj}{{\bf j}}
\def\N{{\mathbb N}}
\def\Ak{{\mathcal A}}
\def\Ik{{\mathcal I}}
\def\be{\begin{equation}}
	\def\ee{\end{equation}}
\newcommand{\Ek}{{\mathcal E}}
\newcommand{\Fk}{{\mathcal F}}
\def\Gk{{\mathcal G}}
\def\ov{\overline}
\def\wt{\widetilde}
\begin{document}

\title{On nonlinear iterated function systems with overlaps}


\date{\today}

\author{Boris Solomyak }
\address{Boris Solomyak\\ Department of Mathematics,
Bar-Ilan University, Ramat-Gan, Israel}
\email{bsolom3@gmail.com}

\thanks{Supported by the Israel Science Foundation grant \#1647/23}

\begin{abstract}
We construct an example of an
iterated function system on the line, consisting of linear fractional transformations, such that two of the maps share a fixed points, but the dimension of the attractor
equals the conformal dimension, so that there is no ``dimension drop''.
\end{abstract}

\maketitle

\thispagestyle{empty}

\section{Introduction}

Let $\Phi=\{\phi_i\}_{i\in \Ik}$ be a collection of $C^{1+\theta}$-smooth maps of a compact interval $I\subset \R$ into itself. Here $\Ik$ is a finite alphabet, with $\#\Ik\ge 2$.
We call $\Phi$ an {\em iterated function system} (IFS). The IFS is assumed to be {\em hyperbolic}, that is, there exist $0 < \gam_1 < \gam_2 < 1$, such that
$$
0 < \gam_1 \le |\phi_i'(x)| \le \gam_2 < 1\ \ \ \mbox{for all}\ i\in \Ik\ \ \mbox{and}\ x\in I.
$$
By \cite{Hutch}, there exists a unique non-empty compact set $\Lam = \Lam_\Phi$, called the {\em attractor} of the IFS, such that $\Lam = \bigcup_{i\in \Ik} \phi(\Lam)$.
Such an IFS is called {\em self-conformal}, and the attractor is sometimes called a {\em self-conformal set}.
It was shown by Falconer \cite{Falc89} (see also \cite[Chapter 3]{Falconer_Tech}) that the Hausdorff dimension of a self-conformal set is always equal to its box-counting (Minkowski) dimension,
so we will simply write $\dim(\Lam)$ below.  We are interested in computing this dimension.

For a finite word $u\in \Ik^n$ let
$$
\phi_u := \phi_{u_1}\circ \cdots \circ \phi_{u_n},\ \ \ I_u:= \phi_u(I).
$$
The {\em pressure function} is given by
\be \label{eq:pressure}
P_\Phi(t) = \lim_{n\to \infty} \frac{1}{n}\log \sum_{u\in \Ik^n} \|\phi'_u\|^t,
\ee
where $\|\cdot\|$ is the supremum norm on $I$. Using the Bounded Distortion Property, it is easy to see that the limit in \eqref{eq:pressure} exists, and there is a unique zero of
{\em Bowen's equation}
$$
P_\Phi(s) = 0,
$$
whose solution $s=s(\Phi)$ is called the {\em conformal dimension} of $\Phi$. We always have the upper bound, obtained using the natural covers by cylinder intervals $I_u$, with
$|u|=n$:
$$
\dim(\Lam_\Phi) \le \min\{1,s(\Phi)\}.
$$
If the Open Set Condition holds, that is, there exists a nonempty open set $\Ok$ such that $f_i(\Ok) \subset \Ok$ and all $f_i(\Ok)$ are mutually disjoint, then
$$
\dim(\Lam_\Phi) = s(\Phi).
$$
These are all classical results; see the original papers \cite{Ruelle1982,Bowen1979}, as well the books \cite{Falconer_Tech,PUbook,BSSbook}. 
There have been many extensions and generalizations, in particular,
to infinite hyperbolic IFS \cite{MU1996} and to parabolic IFS \cite{MU2000}, but we do not discuss them here.

What happens in the ``overlapping case,'' e.g., when the Open Set Condition fails (or when it is not obvious whether it holds or not)? This is much less understood,  
although the dimension
properties of such IFS have been studied for a long time. We do not survey the extensive literature on this topic, but mention the important progress, which occurred in the last ten years
in the study of {\em self-similar} IFS's, that is, when the maps $\phi_i(x) = r_i x + a_i$, with $|r_i|\in (0,1)$ are affine linear contractions. In the self-similar case the conformal dimension
is known as the {\em similarity dimension}:
$$
\dim_{\rm sim}(\Phi) = s,\ \ \ \mbox{where}\ \ \Phi = \{x\mapsto r_i x + a_i\}_{i\in \Ik},\ \ \sum_{i\in \Ik} r_i^s = 1.
$$
The following conjecture was first stated by 
Simon \cite{Si96} in this generality, although special cases of it have been considered earlier.

\medskip

\begin{conjecture}[Exact coincidence conjecture] \label{conj1} If $\Phi = \{x\mapsto r_i x + a_i\}_{i\in \Ik},\ x\in \R$, with $|r_i|\in (0,1)$, is such that
$$ 
\dim(\Lam_\Phi) < \min\{1,\dim_{\rm sim}(\Phi)\},
$$
then $\Phi$ has an ``exact overlap,'' that is, the semigroup generated by $\Phi$ is not free. (Equivalently, there exists $n\in \N$ and two distinct words $u,v\in \Ik^n$, such that
$\phi_u \equiv \phi_v$.)
\end{conjecture}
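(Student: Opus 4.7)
The strategy I would follow is to establish the contrapositive via Hochman's entropy-theoretic machinery. Suppose the semigroup generated by $\Phi = \{x \mapsto r_i x + a_i\}_{i\in\Ik}$ is free; equivalently, $\Delta_n > 0$ for every $n$, where
$$
\Delta_n := \min\bigl\{|\phi_u(x_0)-\phi_v(x_0)| : u,v \in \Ik^n,\ u \neq v\bigr\}
$$
for a fixed basepoint $x_0 \in I$. The aim is to deduce $\dim(\Lam_\Phi) = \min\{1,s\}$. Hochman's 2014 theorem delivers exactly this conclusion under the quantitative hypothesis $\liminf_n n^{-1}(-\log \Delta_n) < \infty$. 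So the first step is to invoke Hochman and reduce Conjecture \ref{conj1} to a purely Diophantine separation statement: if the semigroup is free, then the cylinder separations $\Delta_n$ do not decay super-exponentially.

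For this separation step I would write $\phi_u(x_0) - \phi_v(x_0)$ as a polynomial of degree $O(n)$ in the parameters $(r_i, a_i)$; since $u \neq v$ and the semigroup is free, this polynomial has nonzero value. When the $(r_i, a_i)$ are algebraic, classical height bounds (Liouville-type, or their refinements by Mahler and Baker) translate ``nonzero polynomial at an algebraic point'' into $|\phi_u(x_0) - \phi_v(x_0)| \ge H^{-Cn}$, yielding the required exponential separation. Combined with the first step, this resolves the algebraic case. Such a route has been carried out successfully for Bernoulli convolutions and various parametric families by Hochman, Varj\'u, Breuillard--Varj\'u, Rapaport, Shmerkin and others.

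The main obstacle---and the reason Conjecture \ref{conj1} has remained open for nearly three decades---is the transcendental case. Absent any arithmetic information on $(r_i,a_i)$, one cannot a priori rule out a conspiracy in which $\phi_u(x_0) - \phi_v(x_0)$ is nonzero but decays faster than any exponential as $n \to \infty$; and entropy methods lose control precisely at this super-exponential threshold. Overcoming the gap would seem to require either a genuinely new Diophantine principle valid for transcendental data, or a qualitatively different route that extracts an exact coincidence directly from the dimension deficit, for instance by combining a renormalization of the overlap structure with a rigidity statement for low-dimensional self-similar measures. I expect this transcendental super-exponential-to-exact gap to be the decisive obstacle, and I would not anticipate a resolution without a new conceptual input beyond the current entropy--Diophantine paradigm.
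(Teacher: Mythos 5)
This statement is a conjecture, not a theorem: the paper explicitly says ``the general conjecture remains open'' and offers no proof of it, only the partial result of Hochman (Theorem~\ref{th:hochman}) and the observation that it implies the conjecture for algebraic parameters. Your proposal is therefore not, and does not claim to be, a proof; it is an accurate survey of the known reduction. Your two-step outline --- (i) Hochman's inverse theorem reduces the problem to ruling out super-exponential decay of cylinder separations for free semigroups, and (ii) Liouville-type height bounds supply that separation when the $(r_i,a_i)$ are algebraic --- matches exactly what the paper records, and your identification of the transcendental super-exponential-to-exact gap as the open obstruction is correct. So there is nothing to compare against in the paper, and the ``gap'' in your argument is precisely the open problem itself, which you state honestly.

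One technical caution if you ever try to make step (i) precise: freeness of the semigroup is \emph{not} equivalent to $\Delta_n>0$ for a single fixed basepoint $x_0$, since two distinct affine maps of $\R$ can agree at one point. The correct separation quantity is either a distance on the affine group (as in Hochman's $d(\phi_u,\phi_v)$, which is infinite when the contraction ratios differ and is $|a_u-a_v|$ otherwise) or a supremum over a set of basepoints, as in Definition~\ref{def-sep} of the paper. With a single $x_0$ you could have $\Delta_n=0$ for a free semigroup, and your reduction of freeness to positivity of $\Delta_n$ would break. This does not affect your overall (correct) assessment that the conjecture is open.
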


\medskip

Although the general conjecture remains open, Hochman \cite{Hochman2014}, using ideas from additive combinatorics, has achieved a major breakthrough in this direction. 
The next definition is from \cite{SolTak}.


\begin{defn} \label{def-sep}
Let $\Fk = \{f_i\}_{i\in \Ik}$ be an IFS on a metric space $(X,\varrho)$, that is, $f_i:X\to X$. We say that $\Fk$ 
satisfies the \emph{exponential separation condition} on a set $X'\subseteq X$
if there exists $c > 0$  such that for all $n\in \N$ sufficiently large we have
\begin{equation}\label{exp_sep}
\sup_{x\in X'} \varrho(f_\bi(x), f_\bj(x) ) > c^{n},\ \ \mbox{for all}\  \bi,\bj\in \Ik^{n}\ \ \mbox{with}\  \ f_\bi\not \equiv f_\bj.
\end{equation}
If, in addition, the semigroup generated by $\Fk$ is free, that is, $f_\bi \equiv f_\bj \ \Longleftrightarrow\ \bi=\bj$, we say that $\Fk$ satisfies the {\em strong exponential separation condition}, abbreviated SESC.
If these properties hold for infinitely many $n$, then we say that $\Fk$ satisfies the {\em (strong) exponential separation condition on $X'$ along a subsequence}. 
\end{defn}

\begin{thm}[{Hochman \cite[Corollary 1.2]{Hochman2014}}] \label{th:hochman}
If a self-similar IFS $\Phi = \{x\mapsto r_i x + a_i\}_{i\in \Ik},\ x\in \R$, with $|r_i|\in (0,1)$, satisfies the SESC along a subsequence, then
$$ 
\dim(\Lam_\Phi) = \min\{1,\dim_{\rm sim}(\Phi)\},
$$
\end{thm}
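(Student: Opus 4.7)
The plan is to exhibit a self-similar measure on $\Lam_\Phi$ whose dimension attains $\min\{1,s\}$, where $s:=\dim_{\rm sim}(\Phi)$. Put $p_i := r_i^s$, let $\wtil\mu$ be the $(p_i)$-Bernoulli measure on $\Ik^{\N}$, let $\Pi:\Ik^\N\to\R$ be the natural coding map, and set $\mu := \Pi_*\wtil\mu$. With these weights the Lyapunov exponent $\chi := -\sum_i p_i\log r_i$ and the symbolic entropy $H(\bp) := -\sum_i p_i\log p_i$ satisfy $H(\bp)=s\chi$. The classical bound $\dim\mu\le H(\bp)/\chi = s$ always holds, and $\dim\mu\le\dim\Lam_\Phi\le\min\{1,s\}$, so it suffices to prove $\dim\mu\ge\min\{1,s\}$.

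The argument is driven by a scale-by-scale analysis of $H(\mu,\mathcal{D}_n)$, the Shannon entropy of $\mu$ with respect to the level-$n$ dyadic partition. Iterating self-similarity yields for each level $k$ the identity $\mu = \sum_{u\in\Ik^k} p_u\, \delta_{\phi_u(0)} * (r_u)_*\mu$, a mixture of convolutions. Restricting to words $u$ whose contractions $r_u$ are all $\approx 2^{-n}$ (a set of near-full $\wtil\mu$-measure by large deviations) presents $\mu$ approximately as a single convolution $\mu \approx \mu^{(k)} * \wtil\mu_k$, where $\mu^{(k)} := \sum_{u\in\Ik^k} p_u\,\delta_{\phi_u(0)}$ is the discrete level-$k$ measure and $\wtil\mu_k$ is a rescaled copy of $\mu$ of diameter $\sim 2^{-n}$, with $k = k(n)$ chosen so that $k\chi\sim n\log 2$. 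The symbolic entropy of $\mu^{(k)}$ is $\sim kH(\bp) = ks\chi \sim sn\log 2$, so in the absence of entropy loss during discretization, $\tfrac{1}{n\log 2}H(\mu,\mathcal{D}_n)$ would tend to $s$.

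The crucial new ingredient, and the technical core of \cite{Hochman2014}, is an inverse theorem for entropy of convolutions on $\R$: if $\nu*\eta$ has entropy at scale $2^{-n}$ exceeding that of $\eta$ by less than $\eps n$, then $\nu$ must be $\eps$-concentrated on a union of $2^{o(n)}$ approximate arithmetic progressions of length $\asymp 2^n$. Assume for contradiction that $\dim\mu<\min\{1,s\}$. Applying the inverse theorem to $\mu\approx\mu^{(k(n))}*\wtil\mu_{k(n)}$ forces, for infinitely many $n$, the discrete measure $\mu^{(k)}$ to concentrate up to $\eps$-mass on $2^{o(k)}$ approximate progressions. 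However, along the subsequence provided by SESC the atoms $\phi_u(0)$, $u\in\Ik^k$, are pairwise separated by at least $c^k$, so any such low-complexity cover would force, by pigeonhole, two distinct words $u\ne v\in\Ik^k$ to code atoms at distance $\ll c^k$, contradicting SESC once $k$ is large. Therefore $\tfrac{1}{n\log 2}H(\mu,\mathcal{D}_n)\to\min\{1,s\}$ and $\dim\mu=\min\{1,s\}$.

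The principal obstacle is the inverse theorem itself. Its proof requires a multi-scale decomposition of entropy (separating scales at which the measure looks ``uniform'' from those at which it looks ``atomic'') together with a quantitative additive-combinatorics input of Balog--Szemer\'edi--Gowers flavor, converting bounded entropy growth under convolution into approximate-group structure on the support. Everything else --- the iterated convolution identity, the entropy/Lyapunov bookkeeping that identifies the correct scales $k=k(n)$, and the final pigeonhole-versus-SESC contradiction --- is bookkeeping once this inverse theorem is available.
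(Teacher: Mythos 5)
The paper offers no proof of this statement: it is imported verbatim as \cite[Corollary 1.2]{Hochman2014} and used as a black box, so there is no internal argument to compare yours against. Judged on its own, your sketch is a faithful high-level outline of Hochman's actual proof: the choice of weights $p_i=|r_i|^s$ so that $H(\bp)=s\chi$, the representation of $\mu$ as a convolution of the discrete level-$k$ measure with a rescaled copy of itself, the entropy bookkeeping at dyadic scales, the inverse theorem for entropy of convolutions as the engine, and the final contradiction with exponential separation. Two points are stated imprecisely enough to flag. First, the inverse theorem you quote is not the one Hochman proves or uses: his version says that if $H_n(\nu*\eta)<H_n(\nu)+\delta n$ then, outside a small fraction of scales $i\le n$, either $\nu$ is essentially atomic or $\eta$ is essentially uniform at scale $2^{-i}$; the ``$2^{o(n)}$ approximate arithmetic progressions'' formulation is a Freiman-type statement that is neither established nor needed there, and the actual deduction from the scale-decomposition version requires the extra step that a self-similar measure of dimension $<1$ cannot be uniform at a positive proportion of scales. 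Second, SESC separates the \emph{maps} $\phi_u$, not the atoms $\phi_u(0)$: two distinct compositions can agree at $0$ while having different contraction ratios, so the pigeonhole step as you state it does not directly contradict SESC. Hochman handles this by measuring distance in the similarity group (translation and contraction parts together) and, in the case of non-constant contraction ratios, by conditioning on the contraction part. Finally, the ``along a subsequence'' hypothesis deserves a sentence: the argument gives the entropy lower bound only at the scales where separation holds, and one needs exact dimensionality of $\mu$ (so that $\tfrac{1}{n}H(\mu,\mathcal{D}_n)$ converges) to upgrade a subsequential bound to the dimension itself. None of these is a wrong turn --- they are the genuine technical content of \cite{Hochman2014} hiding behind the phrase ``bookkeeping.''
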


This already implies the validity of Conjecture~\ref{conj1} for self-similar IFS with algebraic parameters. In fact, the results for self-similar sets are deduced from
results on self-similar measures, and there is an analogous conjecture for the dimension of such, see \cite{Hochman2014}, but here we restrict ourselves to the dimension of
attractors for brevity. Further progress, building on the work of Hochman \cite{Hochman2014}, was achieved by Varj\'u \cite{Varju} and Rapaport \cite{Rapaport}, but the conjecture is
still open.

\medskip

Now we turn to the overlapping nonlinear case, where our knowledge is much more limited. Some results of ``almost every''  type were obtained in \cite{SSU1,SSU2} 
(for parabolic and infinite hyperbolic IFS on the line as well) and in
 some later papers (we do not provide an exhaustive bibliography here), using the {\em transversality method}. However, this method has many limitations, since the transversality condition is difficult to check, and moreover, quite often it is known to fail.
 
 One may wonder whether some form of the Exact Coincidence Conjecture holds for a class of nonlinear IFS, maybe under extra assumptions, like real analyticity of the maps.
 Notice that in the non-real analytic case the exact coincidence should be defined for the restrictions of the iterates to the attractor, i.e.,
 $$
 \phi_u|_{\Lam_\Phi} \equiv \phi_v|_{\Lam_\Phi}\ \ \mbox{for some}\ u\ne v \ \mbox{in \ $\Ik^n$}.
 $$
 
 It would be very interesting to extend Hochman's Theorem \ref{th:hochman} in some form to a class of non-linear IFS. As far as we aware, this is currently known only for IFS consisting of 
 linear-fractional transformations. The following result was obtained in joint work with Takahashi \cite{SolTak}.
  We quote \cite[Corollary 1.10]{SolTak} (in a special case for simplicity).

\begin{thm}[{\cite{SolTak}}] \label{th1} \label{th:sotak1}
Let $\mathcal{F} = \{ f_i \}_{ i \in \Ik }$ be a finite collection of linear fractional transformations with real coefficients.  
Assume that there exists $U\subset \R$, a bounded open interval, such that 
$f_i( \overline{U} ) \subset U$ for all $i \in \Ik$.  Let $\Lam_\Fk$ be the attractor of the IFS $\Fk$, and assume that $\Lam_\Fk$ is not a singleton.
If $\mathcal{F}$ satisfies the strong exponential separation condition (SESC) on a non-empty subset of $\ov{U}$, then   
$
\Dh(\Lam_\Fk) = \min\{ 1, s(\Fk) \}, 
$
where $s(\Fk)$ is the conformal dimension of the IFS.
\end{thm}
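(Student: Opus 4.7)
The plan is to reduce the dimension equality for the attractor to one for a natural self-conformal measure, and then to run Hochman's inverse-theorem machinery after a local linearization exploiting that iterates of LFTs are themselves LFTs, hence nearly affine on small cylinders.

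\textbf{Reduction to a measure.} Let $s=s(\Fk)$ and let $\nu$ be the equilibrium state on $\Ik^{\mathbb N}$ for the H\"older potential $-s\log|\phi'_i|$; set $\mu=\pi_*\nu$, where $\pi:\Ik^{\mathbb N}\to\Lam_\Fk$ is the coding map. By the thermodynamic formalism for self-conformal systems (Bowen, Ruelle) combined with Feng--Hu's exact-dimensionality theorem, $\mu$ is exact dimensional; the variational principle applied to Bowen's equation gives Lyapunov dimension $h(\nu)/\chi(\nu)=s$. Since $\mu$ is supported on $\Lam_\Fk\subset\R$ we have $\dim_H(\mu)\le\min\{1,s\}$ automatically, so it suffices to establish the matching lower bound $\dim_H(\mu)\ge\min\{1,s\}$.

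\textbf{Linearization and Hochman's inverse theorem.} For large $n$ form the stopping set $\Ik(n)=\{u\in\Ik^*:2^{-n-1}<\|\phi'_u\|\le 2^{-n}\}$ and write $\mu=\sum_{u\in\Ik(n)}\nu([u])\,\phi_u^{}(\mu_u)$, where bounded distortion makes each $\mu_u$ uniformly bi-Lipschitz equivalent to $\mu$. On each cylinder $I_u$, Taylor-expand the LFT $\phi_u$ around a base point $x_u\in I_u$:
\[
\phi_u(x)=a_u+r_u(x-x_u)+O\bigl(\|\phi'_u\|\cdot(x-x_u)^2\bigr),
\]
so that after centering at $a_u$ and rescaling by $2^n$, the measure $\phi_u^{}(\mu_u)$ agrees with an affine image $a_u+r_u\mu$ up to a Wasserstein error $O(2^{-n})$, negligible at the dyadic scale relevant to entropy computations. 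If $\dim_H(\mu)<\min\{1,s\}$, Hochman's inverse theorem for the entropy of convolution-like decompositions \cite{Hochman2014} forces, at a positive-density set of scales $n$, exponentially many pairs $u\ne v\in\Ik(n)$ with $|a_u-a_v|+|r_u-r_v|\le e^{-cn}$. Since a real LFT on a bounded interval is determined, up to a uniformly bounded Schwarzian correction, by its affine first-order data, such pairs satisfy $\|\phi_u-\phi_v\|_{C^0(\overline U)}\le e^{-c'n}$. Choosing $x$ in the nonempty set on which SESC holds, this directly contradicts \eqref{exp_sep}: if $\phi_u\not\equiv\phi_v$, the bound $e^{-c'n}$ beats the lower bound $c^n$, while $\phi_u\equiv\phi_v$ is excluded by the free-semigroup clause of SESC.

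\textbf{Main obstacle.} The hardest ingredient is closing the linearization-plus-Hochman loop quantitatively: one must check that the family $\{\phi_u^{}(\mu_u)\}_{u\in\Ik(n)}$ genuinely behaves, at scale $2^{-n}$, like the convolution structure of an affine self-similar measure for which Hochman's inverse theorem is available, with the $O(2^{-n})$ linearization error absorbed into the entropy estimates; and that concentration of the linearized parameters $(a_u,r_u)$ reliably translates back into a $C^0$ coincidence of the original LFTs on all of $\overline U$, not merely on sampled points. The LFT structure -- in particular that the relevant parameter space is a three-dimensional Lie group on which the symbolic SESC pulls back to an honest metric separation -- is what makes this conversion viable in the linear-fractional category and is precisely what would fail for general $C^{1+\theta}$ maps.
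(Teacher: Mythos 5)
A preliminary point: the paper does not prove this statement at all. It is imported verbatim from \cite{SolTak} (Corollary 1.10 there), and the Remark immediately following it records that the actual proof goes through the machinery of \cite{HS2017} for the Furstenberg (stationary) measure of random walks on $SL(2,\R)$ acting on $\RP^1$ --- i.e.\ through an exact convolution structure $\mu=\nu^{*n}.\mu$ on the group and an inverse theorem for entropy growth under the projective action, not through linearization to the affine case. So your proposal is being compared against that route, and it diverges from it at exactly the point where it breaks.

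The genuine gap is your linearization step. For a word $u$ in your stopping set with $\|\phi_u'\|\asymp 2^{-n}$, bounded distortion forces $\|\phi_u''\|\asymp 2^{-n}$ as well, so the first-order Taylor model $a_u+r_u(x-x_u)$ deviates from $\phi_u$ on the full interval $\overline U$ (of length $O(1)$) by an amount of order $2^{-n}$ --- which is the \emph{same} order as $\diam\phi_u(\overline U)$. The affine surrogate therefore displaces the component measure $\phi_u(\mu_u)$ by a constant fraction of its own support; this error is not ``negligible at the dyadic scale relevant to entropy computations,'' it dominates the entropy at every scale finer than $2^{-n}$, which is precisely where Hochman's inverse theorem does its work. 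This is the known obstruction to extending Theorem~\ref{th:hochman} to nonlinear systems, and it is why the linear-fractional case is handled by working with the honest convolution of measures on $SL(2,\R)$ rather than an approximate affine one. A second, independent gap: concentration of the first-order data $(a_u,r_u)$ is a two-parameter statement, while a linear fractional transformation has three parameters; two LFTs can agree to first order at a point and still be $C^0$-far on $\overline U$, so your inference ``$|a_u-a_v|+|r_u-r_v|\le e^{-cn}\Rightarrow\|\phi_u-\phi_v\|_{C^0(\overline U)}\le e^{-c'n}$'' does not follow, and the contradiction with \eqref{exp_sep} is never reached. (This is exactly why the paper phrases the relevant separation as a Diophantine condition on the matrices, equivalent to SESC on a set with at least \emph{three} points.) Your opening reduction to the Gibbs measure with $h(\nu)/\chi(\nu)=s$ is fine and is consistent with the standard scheme, but the core of the argument as written would not close.
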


Note that $\Lam_\Fk = \{x\}$ is equivalent to $x$ being the common fixed point of all
the maps of the IFS. In the case of a self-similar IFS this implies that the maps commute, which yields exact overlaps. This is no longer true for linear-fractional transformations, 
and thus we need the assumption of $\Lam_\Fk$ not being a singleton.

\begin{rem}
Theorem~\ref{th:sotak1} was obtained as a by-product of a study of {\em hyperbolic projective IFS} consisting of M\"obius transformations in \cite{SolTak}. It is, in fact, a rather 
straightforward consequence of the results and techniques from a joint work with Hochman \cite{HS2017} on the 
dimension properties of the Furstenberg (stationary) measure for random walks on the $SL(2,\R)$ in \cite{HS2017}. Some of the results from
\cite{SolTak} were extended by Christodoulou and Jurga \cite{ChrisJurga20} to projective IFS which contain a parabolic map.
\end{rem}

The following question was raised by Micha\l \ Rams in a personal communication to Bal\'azs B\'ara\'ny.

\medskip

\noindent {\bf Question.} {\em 
Is there a conformal (strictly contractive) IFS on $\R$, such that two maps share a fixed point, but nevertheless the dimension of the attractor is equal to the conformal 
dimension?}

\medskip

Note that this is impossible for a self-similar IFS, since two affine linear maps on $\R$, sharing a fixed point, necessarily commute.
In the next section we provide an affirmative answer, using Theorem~\ref{th:sotak1}.

\section{Example}
 
It is well-known that the action of $SL(2,\R)$ on $\RP^1$, which can be identified with $\R\cup  \{\infty\}$, can be expressed in terms of linear fractional transformations.
For 
\begin{equation*}
A = 
\begin{pmatrix}
a & b \\
c & d \\
\end{pmatrix}
\in SL_2(\mathbb{R}), 
\end{equation*}
let
$f_A(x) = (ax + b)/(cx + d)$.

We will consider the IFS $\{f_1,f_2,f_3\} = \{f_A, f_B, f_C\}$ on the real line, where
\be \label{ifs1}
A = \begin{pmatrix}
\half & 0 \\
2 & 2 \\
\end{pmatrix},\ \ \ \ \ 
B = \begin{pmatrix}
\half & 0 \\
0 &  2 \\
\end{pmatrix},\ \ \ \ \ 
C_t = \begin{pmatrix}
\half & t \\
0 & 2 \\
\end{pmatrix}
\ee
for an appropriate parameter $t$, so that
$$
f_1(x) = \frac{x}{4x+4},\ \ \ \ f_2(x) = \frac{x}{4},\ \ \ \ f_3^{(t)}(x) = \frac{x}{4}+ \frac{t}{2}.
$$
Notice that $f_1(0) = f_2(0) = 0$. We will require that $t>0$. Then $\{f_1,f_2,f^{(t)}_3\}$ is a strictly contracting conformal IFS on $I=I_t:=[0, x_0^{(t)}]$, 
where $x_0^{(t)} = 2t/3$ is the fixed point of $f_3^{(t)}$. Under these assumptions,
 $$\max_{j\le 3} \bigl\{\bigl\|{f_j'|}_I\bigr\|_\infty\bigr\} \le 1/4,$$ hence the conformal dimension of the IFS is strictly less than 1. Notice that
 $$
 f_1(I) = [0, t/(4t+6)],\ \ \ \ f_2(I) = [0, t/6],\ \ \ \ f_3^{(t)}(I) = [t/2,x_0^{(t)}] = [t/2, 2t/3].
 $$
 
 \begin{thm}\label{th:main}
 Consider the IFS $\Fk_t = \{f_1,f_2,f^{(t)}_3\}$, and let $\Lam_t$ be its attractor.
 For all $t>0$ outside a set of Hausdorff dimension zero holds
 $$
 \dim(\Lam_t) = \dim_{\rm conf}(\Fk_t).
 $$
 \end{thm}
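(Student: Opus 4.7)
The plan is to apply Theorem~\ref{th:sotak1} to the IFS $\Fk_t = \{f_1, f_2, f_3^{(t)}\}$ on $I_t = [0, 2t/3]$. The hypotheses on a neighborhood $U$ and on the non-singleton attractor are immediate: each map strictly contracts a slight open enlargement of $I_t$, and $\Lam_t$ contains both $0$ (fixed by $f_1$ and $f_2$) and $f_3^{(t)}(0) = t/2 > 0$. Thus the task reduces to verifying the strong exponential separation condition (SESC) for $\Fk_t$ on a non-empty subset of $I_t$, for every $t$ outside a set $E \subset (0,\infty)$ of zero Hausdorff dimension.

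First I would encode $\Fk_t$ by the matrices $A, B, C_t \in SL_2(\R)$ from~\eqref{ifs1}. For a word $\bi \in \Ik^n$, the product $M_\bi(t) = M_{i_1}\cdots M_{i_n}$ has entries that are polynomials in $t$ of degree at most $n$, with rational coefficients of height bounded by $K^n$ for some explicit $K$; moreover, $f_\bi^{(t)} \equiv f_\bj^{(t)}$ on $\R$ iff $M_\bi(t) = \pm M_\bj(t)$. The key algebraic input is that $M_\bi(t) - \epsilon M_\bj(t)$ is not identically zero for any distinct $\bi, \bj \in \Ik^n$ and any $\epsilon \in \{\pm 1\}$. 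Let $\pi\colon \Ik \to \{1,2\}$ be the projection sending $3 \mapsto 2$ and fixing $1, 2$; since $C_0 = B$, one has $M_\bi(0) = M_{\pi(\bi)}$, and a short induction gives
\begin{equation*}
M_{\bi'} = \begin{pmatrix} 2^{-n} & 0 \\ c(\bi') & 2^n \end{pmatrix}, \qquad c(\bi') = \sum_{k:\, i'_k = 1} 2^{2k-n} \ge 0,
\end{equation*}
for $\bi' \in \{1,2\}^n$, with $\bi' \mapsto c(\bi')$ injective by uniqueness of subset sums of distinct powers of $2$. This handles both signs whenever $\pi(\bi) \ne \pi(\bj)$. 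In the remaining case $\pi(\bi) = \pi(\bj)$, the words agree on their $1$-positions and differ only by interchanging some $2$'s and $3$'s; a direct computation shows that the $(1,2)$-entry of $\left.\frac{d}{dt}\bigl(M_\bi(t) - M_\bj(t)\bigr)\right|_{t=0}$ equals $\sum_{k \in T_\bi} 2^{n-2k+1} - \sum_{k \in T_\bj} 2^{n-2k+1}$, where $T_\bi, T_\bj$ are the sets of $3$-positions. This is nonzero since the exponents $n-2k+1$ are all distinct. The $\epsilon = -1$ case is again excluded at $t = 0$ by positivity of the diagonal entries.

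Given this non-degeneracy, for any fixed $x_0 \in I_t$ the separation $\Delta_{\bi,\bj}(t) := f_\bi^{(t)}(x_0) - f_\bj^{(t)}(x_0)$ is a rational function whose numerator is a nonzero polynomial of degree $\le 2n$ with controlled height. The final step is to deduce $\dim_H E = 0$, which I would obtain by invoking the Hochman-type parametric transversality scheme~\cite[Theorem~1.8]{Hochman2014}, adapted to the real-analytic one-parameter family of linear-fractional IFSs in the spirit of \cite{SolTak} and \cite{HS2017}: the algebraic non-degeneracy established above supplies the input such a scheme requires. Combined with Theorem~\ref{th:sotak1}, this yields $\dim(\Lam_t) = s(\Fk_t) = \dim_{\rm conf}(\Fk_t)$ for every $t \notin E$.

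The main obstacle is this last step---adapting Hochman's parametric transversality machinery to the present LFT setting to obtain a zero-Hausdorff-dimensional exceptional set rather than merely one of Lebesgue measure zero. The algebraic non-vanishing above is the routine part, made tractable by the essentially triangular structure of $A$, $B$, and $C_t$.
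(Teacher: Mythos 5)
There is a genuine gap, and it sits exactly where you flag ``the main obstacle'': the passage from your algebraic non-degeneracy to SESC for $\Fk_t$ outside a zero-dimensional set of parameters. The parametric machinery you propose to invoke (Hochman's Theorem 5.9, or its linear-fractional analogue, Theorem~\ref{th2} from \cite{SolTak}) does not take as input the non-vanishing of $M_\bi(t)-\epsilon M_\bj(t)$; it requires the family to be \emph{non-degenerate} in the sense that for every pair of distinct \emph{infinite} words $\bi\ne\bj\in\Ik^\N$ there is some $t_0$ with $\Pi_{t_0}(\bi)\ne\Pi_{t_0}(\bj)$. For $\Fk_t$ this fails irreparably: since $f_1$ and $f_2$ share the fixed point $0$, every $\bi\in\{1,2\}^\N$ satisfies $\Pi_t(\bi)=0$ for all $t$, so the family is degenerate and the theorem simply does not apply. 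What you actually prove (freeness of $\{A,B\}^+$ via injectivity of the subset-sum entry, plus non-identical vanishing of $M_\bi(t)-\epsilon M_\bj(t)$ via the derivative of the $(1,2)$-entry at $t=0$) is essentially the paper's Lemma~\ref{lem:free}, which the paper presents as a ``warm-up'' and then explicitly declines to use. Indeed, whether $\Fk_t$ satisfies the SESC for even a single $t>0$ is posed as an open question in the paper's concluding remarks, so the step you defer to ``adapting the scheme'' is not routine --- it is the unresolved heart of the problem.

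The missing idea is the paper's reduction to induced systems: one replaces $\Fk_t$ by $\Fk_t^{(n)}=\{f_u^{(t)}: u\in\{1,2,3\}^n,\ u \text{ contains a } 3\}$. Because every map of $\Fk_t^{(n)}$ involves $f_3^{(t)}$, which moves points off the common fixed point and depends genuinely on $t$, this family \emph{is} non-degenerate (Proposition~\ref{prop:nondegen}, proved by ordering the cylinders $I^t_{v3}$ and separating them for large $t$), so Theorem~\ref{th2} and then Theorem~\ref{th:sotak1} give $\dim(\Lam_t^{(n)})=\dim_{\rm conf}(\Fk_t^{(n)})$ off a zero-dimensional set. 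The second ingredient you are missing is Proposition~\ref{prop:confdim}, the pressure estimate showing $\dim_{\rm conf}(\Fk_t^{(n)})\to\dim_{\rm conf}(\Fk_t)$, which converts the lower bounds for the sub-attractors $\Lam_t^{(n)}\subset\Lam_t$ into the claimed equality for $\Lam_t$. Without some such device for circumventing the degeneracy caused by the shared fixed point, your argument cannot be completed as written.
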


It is known (and not hard to see) that for an IFS of linear-fractional transformations
 the SESC on a set containing at least three points is equivalent to the strong Diophantine condition for the corresponding sub-semigroup of $SL(2,\R)$.

\begin{definition}[{\cite{SolTak} and \cite{HS2017}}]
Let $\Ak = \{A_i\}_{i\in \Ik}$ be a finite subset of a semi-simple Lie group $G$ equipped with a metric $\varrho$. Write $A_\bi = A_{i_1}\cdots A_{i_n}$ for $\bi = i_1\ldots i_n$.
We say that the set $\Ak$ is {\em Diophantine} if there exists a constant $c>0$ such that for every $n\in \N$, we have
 \begin{equation} \label{Dioph1}
\bi,\bj\in \Ik^n,\ A_\bi\ne A_\bj \implies \varrho(A_\bi,A_\bj) > c^n.
\end{equation}
The set $\Ak$ is {\em strongly Diophantine} if there exists $c>0$ such that for all $n\in \N$,
\begin{equation} \label{Dioph2}
\bi,\bj\in \Ik^n,\ \bi\ne \bj \implies \varrho(A_\bi,A_\bj) > c^n.
\end{equation}
\end{definition}

If we could choose $t>0$ algebraic in such a way that the semigroup $\{A,B,C_t\}^+$ is free, we would be done, because a subset of $SL(2,\R)$ consisting of matrices with algebraic
elements is Diophantine (see, for example, \cite[Prop. 4.3]{GJS1999}). We do not know if this is possible. Instead, we start with the following ``warm-up''

\begin{lem} \label{lem:free}
For all but countable many $t>0$ the semigroup $\{A,B,C_t\}^+$ is free.
\end{lem}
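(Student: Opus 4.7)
The plan is to treat each matrix entry of $A_{\bi}(t)=A_{i_1}\cdots A_{i_n}$ as a polynomial in $t$. For each fixed pair of distinct equal-length words $\bi\ne\bj$, the set $\{t\in\R:A_{\bi}(t)=A_{\bj}(t)\}$ is either all of $\R$ (if the entrywise polynomial identity holds) or is finite. Since there are only countably many such word pairs, the lemma will follow once we show that no pair of distinct equal-length words can produce identical polynomial matrices in $t$.

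First I specialize $t=0$. Since $C_0=B$, the product $A_{\bi}(0)$ lies in the sub-semigroup $\{A,B\}^+$. A short induction (or the change of coordinate $y=1/x$, which conjugates $f_1,f_2$ to the affine maps $y\mapsto 4y+4$ and $y\mapsto 4y$) shows that a length-$n$ product of $A$'s and $B$'s is lower triangular with diagonal $(2^{-n},2^n)$ and $(2,1)$-entry $2^{-n}\sum_{k:\,i_k=1}4^k$. Since $\sum_{k\in K}4^k$ uniquely determines $K\subseteq\{1,\ldots,n\}$, this entry recovers the positions of the letter $A$, so $\{A,B\}^+$ is free. Hence $A_{\bi}(0)=A_{\bj}(0)$ forces $\bi$ and $\bj$ to have their $1$'s in exactly the same positions; denote this common set by $T$.

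To distinguish positions carrying $B$ from those carrying $C_t$, write $C_t=B+tN$ with $N=\bigl(\begin{smallmatrix}0&1\\0&0\end{smallmatrix}\bigr)$ and examine the coefficient of $t^1$:
$$A_{\bi}'(0)=\sum_{k\in S_{\bi}}P_{k-1}\,N\,Q_k,$$
where $S_{\bi}=\{k:i_k=3\}$ and $P_{k-1}$, $Q_k$ are the products of $A$'s and $B$'s to the left and to the right of position $k$ at $t=0$. Crucially, $P_{k-1}$ and $Q_k$ depend only on the common set $T$, so they are identical whether built from $\bi$ or from $\bj$. A direct $2\times 2$ computation using the diagonal structure of $P_{k-1},Q_k$ gives $(P_{k-1}NQ_k)_{12}=2^{n-2k+1}$. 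Equating the $(1,2)$-entries of $A_{\bi}'(0)$ and $A_{\bj}'(0)$ yields $\sum_{k\in S_{\bi}}2^{n-2k+1}=\sum_{k\in S_{\bj}}2^{n-2k+1}$, and uniqueness of binary expansions forces $S_{\bi}=S_{\bj}$. Combined with $T_{\bi}=T_{\bj}$, this gives $\bi=\bj$, contradicting the assumption.

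The argument reduces to two ``base-$b$'' uniqueness observations living inside the same lower-triangular structure: the $(2,1)$-entry of an $\{A,B\}$-product records the $A$-positions in base $4$, while the $(1,2)$-entry of the linear-in-$t$ term records the $C_t$-positions in base $2$. I expect no serious obstacle once these two invariants are in hand; the only real risk is a sign or index slip in the three-matrix product $P_{k-1}NQ_k$, which is pure bookkeeping.
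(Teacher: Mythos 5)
Your proof is correct, and while it shares the paper's overall skeleton --- for each fixed pair of distinct words the entries of $A_{\bi}(t)-A_{\bj}(t)$ are polynomials in $t$, so the bad parameter set is finite unless the identity holds for all $t$; the identical case is then excluded by specializing at $t=0$; finally one takes a countable union over word pairs --- it resolves the crucial degenerate case by a genuinely different mechanism. The paper first cancels a common prefix and uses geometry ($f_C(I)$ is disjoint from $f_A(I)\cup f_B(I)$) to reduce any relation to the form $AX(t)=BY(t)$, and then observes that at $t=0$ this would contradict freeness of $\{A,B\}^+$, which it imports from Cassaigne--Harju--Karhum\"aki. You do not cancel down to distinct first letters, so at $t=0$ two distinct words may genuinely collapse to the same element of $\{A,B\}^+$ (namely when they differ only in $2$ versus $3$); you recover the lost information from the coefficient of $t^1$, where the computation $(P_{k-1}NQ_k)_{12}=2^{n-2k+1}$ (which checks out) turns the set of $C_t$-positions into a sum of distinct powers of $2$, exactly parallel to your base-$4$ invariant in the $(2,1)$-entry that also gives a direct, self-contained proof of the freeness of $\{A,B\}^+$. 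Your version is purely matrix-theoretic and avoids both the external freeness citation and the geometric separation argument, at the cost of one extra explicit computation.

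One small patch is needed: you reduce to pairs of distinct words of \emph{equal} length, but freeness also requires excluding relations $A_{\bi}(t)=A_{\bj}(t)$ with $|\bi|\ne|\bj|$. This costs nothing with your own tools: for such a pair the exceptional set is again finite unless the polynomial identity holds for all $t$, and the identity is impossible because at $t=0$ the two products are lower triangular with diagonals $(2^{-|\bi|},2^{|\bi|})$ and $(2^{-|\bj|},2^{|\bj|})$, forcing $|\bi|=|\bj|$. (The paper is similarly terse at this spot; it implicitly discards the case where one word is a prefix of the other.) With that sentence added, your argument is complete.
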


\begin{proof}
First we note that $\{A,B\}^+$ is free. In general, the problem of freeness for semigroups of rational matrices is undecidable, even for triangular $2\times 2$ matrices \cite{CHK99}.
However, there are many effective sufficient conditions. In particular, it follows from \cite[Section 4.2]{CHK99} that $\{A,B\}^+$ is free. In fact,
 $\{A,B\}$ is easily reduced to the canonical form
$$\left\{ \begin{pmatrix}
1/4 & 0 \\
0 & 1 \\
\end{pmatrix},\ \ 
\begin{pmatrix}
1/4 & 1 \\
0 & 1 \\
\end{pmatrix}\right\}
$$ considered in that paper, and then \cite[Proposition 2]{CHK99} gives the claim. (For completeness, we include the proof of  freeness in the Appendix, following \cite{CHK99}).

Now suppose that $\{A,B,C_t\}^+$ is not free for some $t$. Then  $AX(t) = BY(t)$, where $X,Y$ are some words in $\{A,B,C_t\}^+$. (Indeed, $AX(t) = CY(t)$ and
$BX(t) = CY(t)$ are impossible, since $f_C(I)$ is disjoint from $f_A(I)$ and $f_B(I)$.) Then $u\mapsto AX(u) - BY(u)$ is a  matrix-function having a zero at $u=t$. This function
is well-defined and real-analytic on $(-\eps,+\infty)$, hence it is either constant zero, or it has no accumulation points in $[0,\infty)$. However, if it is constant zero, we obtain that
$AX(0) = BY(0)$. Note that $C_0 = B$, so $AX(0) = BY(0)$ then gives a non-trivial relation in the semigroup $\{A,B\}^+$, contradicting the claim above.

Thus, for any finite words $X,Y \in \{A,B,C_t\}^+$, there are at most countable many $t>0$ such that $AX(t) = BY(t)$, and the lemma is proved.
\end{proof}

In the end, our proof will not rely on Lemma~\ref{lem:free}. Instead
we are going to use another result from \cite{SolTak}, saying that a 1-parameter family of real-analytic IFS, with a real-analytic dependence on the parameter, under some mild 
non-degeneracy assumptions satisfies the SESC for all parameters outside of a Hausdorff dimension zero set. This is a generalization of \cite[Theorem 5.9]{Hochman2014}, which deals with real-analytic families of self-similar IFS.

\medskip 

 We recall the set-up from \cite[Section 2.3]{SolTak} in our special case (in \cite{SolTak} the case of  $d$-dimensional IFS is considered).
Let $J$ be a compact interval in $\R$ and $V$ a bounded open set in $\R$. 
Let $\Ik$ be a finite set, $|\Ik|\ge 2$, and suppose that for each $i \in \Ik$ we are given a real-analytic function
$$
\phi_i:\,\ov{V}\times J \to V. 
$$
This means that it is real-analytic on some neighborhood of $\ov{V}\times J$. Denote $\Phi_t = \{ \phi_i(x,t) \}_{ i \in \Ik }$. This is a real-analytic IFS on $\ov{V}$, depending on the parameter $t\in J$ real-analytically.
For $t\in J$ let $\Pi_t: \Ik^\N\to \R$ be the natural projection map corresponding to $\Phi_t$.

\begin{defn}
The IFS family $\Phi_t, \ t\in J$, is called {\em non-degenerate} if 
$$
\bi,\bi\in \Ik^\N,\ \bi\ne \bj \implies \exists\, t_0 \in J\ \ \mbox{such that}\  \Pi_{t_0}(\bi) \ne \Pi_{t_0}(\bj).
$$
\end{defn}

\begin{thm}[{\cite[Theorem 2.10]{SolTak}}] \label{th2}
Suppose that the family $\Phi_t, \ t\in J$, is non-degenerate. Then $\Phi_t$ satisfies the SESC condition on the singleton $\{x_0^{(t)}\}$ for any
fixed $x_0^{(t)}\in V$ for all parameters $t\in J$ outside of
an exceptional set of Hausdorff dimension zero.
\end{thm}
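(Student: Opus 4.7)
The plan is to extend Hochman's covering argument for real-analytic families \cite[Theorem 5.9]{Hochman2014}, originally developed for the self-similar case, to the present nonlinear IFS setting. For $\bi \ne \bj$ in $\Ik^n$, introduce the real-analytic test function
$$g_{\bi,\bj}(t) := \phi_\bi(x_0^{(t)}, t) - \phi_\bj(x_0^{(t)}, t),$$
which extends holomorphically to a fixed complex neighborhood $\Omega$ of $J$ with the uniform bound $\|g_{\bi,\bj}\|_\Omega \le C\gam_2^n$ coming from the uniform contraction of the IFS. The exceptional set described in the theorem is exactly
$$E = \bigcap_{k \ge 1}\limsup_{n \to \infty} \bigcup_{\bi \ne \bj \in \Ik^n}\{t \in J : |g_{\bi,\bj}(t)| \le k^{-n}\}.$$
A preliminary step is to verify that $g_{\bi,\bj} \not\equiv 0$ on $J$ whenever $\bi \ne \bj$. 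This is where non-degeneracy enters: applied to extensions $\bi\omega, \bj\omega$ with arbitrary common tails $\omega$, non-degeneracy first yields that $\phi_\bi(\cdot, t) \not\equiv \phi_\bj(\cdot, t)$ on $V$ for generic $t \in J$; a further real-analytic argument then propagates non-vanishing to the specific base point $x_0^{(t)}$, after absorbing the possible exceptional base points into the dimension-zero set $E$.

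The analytic heart of the proof is a Cartan / Jensen-type estimate: if $g$ is holomorphic on a disk $D$ with $\|g\|_D \le M$ and $|g(t^\star)| \ge m$ at some $t^\star$ in a concentric smaller disk, then $g$ has at most $d = O(\log(M/m))$ zeros in that smaller disk, and for any $\epsilon > 0$ the sublevel set $\{t \in J : |g(t)| < \epsilon\}$ is contained in at most $d+1$ real intervals of total length $\lesssim (\epsilon/m)^{1/d}$. Applied to $g = g_{\bi,\bj}$ with $M = C\gam_2^n$ and $m = m_{\bi,\bj} := \sup_{t\in J}|g_{\bi,\bj}(t)| > 0$, this gives an efficient covering of $\{t \in J : |g_{\bi,\bj}(t)| \le k^{-n}\}$. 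To control $d$ uniformly in $n$, one stratifies the pairs $(\bi,\bj)$ by the scale of $m_{\bi,\bj}$ and exploits the composition identity $g_{\bi' w, \bj' w}(t) = \phi_{\bi'}(\phi_w(x_0^{(t)},t),t) - \phi_{\bj'}(\phi_w(x_0^{(t)},t),t)$ together with the Bounded Distortion Property to recycle estimates across pairs sharing a long common suffix $w$.

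Summing the Cartan coverings over all at most $|\Ik|^{2n}$ pairs and over all $n \ge N$, one shows that for every $s > 0$ and sufficiently large $k$, the set $\limsup_n \bigcup_{\bi \ne \bj \in \Ik^n}\{|g_{\bi,\bj}|\le k^{-n}\}$ has finite (and in fact vanishing as $k \to \infty$) $s$-dimensional Hausdorff measure; intersecting over $k \to \infty$ then forces $\dim_H E = 0$. The main obstacle is exactly the tension between the exponential number of pairs $|\Ik|^{2n}$ and the Cartan decay, which is only polynomial in $\epsilon$ with exponent $1/d$ that degrades as the ratio $M/m = C\gam_2^n/m_{\bi,\bj}$ grows. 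In the self-similar case Hochman resolved this via the polynomial structure of the IFS (only polynomially many distinct analytic functions at each degree $n$); in the nonlinear setting one must instead extract a comparable reduction from the semigroup composition structure of the IFS and bounded distortion, effectively grouping pairs with shared prefixes or suffixes and reducing the effective number of independent analytic constraints per scale.
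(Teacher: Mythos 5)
This statement is not proved in the paper at all: it is quoted as Theorem 2.10 of \cite{SolTak}, so there is no internal proof to compare against. What follows therefore assesses your sketch against the argument actually used in \cite{SolTak} and its model, Theorem 5.9 of \cite{Hochman2014}. Your overall architecture --- the test functions $g_{\bi,\bj}$, a Cartan/Jensen sublevel-set covering, and a $\limsup$ over $n$ intersected over $k$ --- is the correct one, but both places where the real work happens are left as gaps, and the fixes you gesture at would not close them.

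First, the uniform zero-count $d$ and, more importantly, an \emph{exponential} lower bound $\sup_{t\in J}|g_{\bi,\bj}(t)|\ge c_1^{\,n}$ are indispensable: if $m_{\bi,\bj}$ is allowed to decay super-exponentially, then for some pairs the sublevel set $\{t: |g_{\bi,\bj}(t)|\le k^{-n}\}$ is all of $J$ and no covering argument can start; your proposal to ``stratify by the scale of $m_{\bi,\bj}$'' does not produce the needed bound. The actual mechanism is not ``polynomial structure'' (that is a mischaracterization of \cite{Hochman2014}, whose Theorem 5.9 already treats general real-analytic families) nor ``grouping pairs with shared suffixes''; it is a normal-families compactness argument: one strips the longest common prefix $u$ (losing only a factor $\ge\gam_1^{|u|}$ by the chain rule and bounded distortion), reduces to pairs with distinct first symbols, normalizes those differences by their sup-norm on $J$, and shows via Montel's theorem that the closure of the normalized family in a complex neighborhood of $J$ contains no function vanishing identically on $J$ --- the limit points being exactly the functions $t\mapsto\Pi_t(\bi)-\Pi_t(\bj)$ for distinct infinite words, which is precisely where non-degeneracy enters. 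Compactness of this closure then yields simultaneously the uniform bound on the number of zeros and the uniform positive lower bound on the normalized sup-norms. Second, your treatment of the base point is not a valid move: if $g_{\bi,\bj}\equiv 0$ on $J$ for even a single finite pair $\bi\ne\bj$, then SESC on $\{x_0^{(t)}\}$ fails for \emph{every} $t\in J$, so such pairs cannot be ``absorbed into the dimension-zero exceptional set,'' and non-degeneracy (a statement about natural projections of \emph{infinite} words) does not by itself ``propagate'' to an arbitrary base point. One must instead compare $\phi_{\bi,t}(x_0^{(t)})$ with $\Pi_t(\bi\om)$ for a fixed tail $\om$, note that the discrepancy is $O(\gam_2^{\,n})$ and hence harmless at the exponential scale of the separation being proved, and run the covering argument for the genuine projection differences, to which non-degeneracy applies directly.
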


Our family $\Fk_t:= \{f_1,f_2, f_3^{(t)}\}$ 
is clearly NOT non-degenerate since for any $\bi, \bj\in \{1,2\}^\N$ we have $\Pi_{t}(\bi) = \Pi_{t}(\bj) = 0$. The idea is to consider a sequence of IFS:
\be \label{def:ftn}
\Fk_t^{(n)} = \bigl\{f^{(t)}_u:\ u\in \{1,2,3\}^n, \ u\ \mbox{contains  3}\bigr\}.
\ee
It is obvious that its attractor $\Lam_t^{(n)} \subset \Lam_t$, and we will show that $\dim_{\rm conf}( \Fk_t^{(n)})\to \dim_{\rm conf}( \Fk_t)$, as $n\to \infty$.
Since the upper bound $\dim(\Lam_t) \le \dim_{\rm conf}( \Fk_t)$ always holds, it is enough to show that for each $n$ the IFS $\Fk_t^{(n)}$ satisfies the SESC, and then by
Theorem~\ref{th1} we have $\dim(\Lam_t^{(n)}) = \dim_{\rm conf}( \Fk_t^{(n)})$, and the desired claim for $\Fk_t$ follows.

\begin{prop}\label{prop:nondegen}
For each $n\in \N$ there exist $0 < t_n < t_n' < \infty$ such that $\Fk_t^{(n)}$  is non-degenerate on $[t_n, t_n']$.
\end{prop}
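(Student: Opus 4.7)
The plan is to take $[t_n, t_n']$ as any closed subinterval of $(0,\infty)$ of positive length, for instance $[1,2]$, and exploit real-analyticity. For each $\bi \in \Ak_n^\N$ the projection $t \mapsto \Pi_t(\bi)$ extends to a real-analytic function on $(0,\infty)$, since $\Fk_t$ is uniformly contracting on $I_t$ with maps depending real-analytically on $t$. Because the zero set of a nontrivial real-analytic function on a connected open set is discrete, non-degeneracy of $\Fk_t^{(n)}$ on $[t_n, t_n']$ is equivalent to the assertion that for every pair $\bi \ne \bj$ in $\Ak_n^\N$, the function $t \mapsto \Pi_t(\bi) - \Pi_t(\bj)$ is not identically zero on $(0,\infty)$. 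The choice of interval plays no role; the task becomes a pairwise separation statement.

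Fix such a pair and view $\bi, \bj$ as sequences in $\{1,2,3\}^\N$ via concatenation. Let $k$ be the first index at which they disagree; the common prefix $f_w^{(t)}$ is an injection of $I_t$ into itself, so I may reduce to $\bi$ and $\bj$ disagreeing at their first letter. If one of the first letters is $3$ and the other lies in $\{1,2\}$, then $\Pi_t(\bi)$ and $\Pi_t(\bj)$ lie respectively in $f_3^{(t)}(I_t) = [t/2, 2t/3]$ and in $f_1(I_t) \cup f_2(I_t) \subseteq [0, t/6]$, which are disjoint for every $t > 0$, so this case is immediate. This leaves the case in which the disagreeing first letters are $1$ and $2$.

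For this remaining case I pass to the rescaled coordinate $z = x/t$, in which the maps read $\tilde f_1^{(t)}(z) = z/(4tz + 4)$, $\tilde f_2(z) = z/4$, $\tilde f_3(z) = z/4 + 1/2$ on $[0, 2/3]$, and $\tilde\Pi_t(\bi) = \Pi_t(\bi)/t$ extends real-analytically to a neighborhood of $[0,\infty)$. Since $\tilde f_1^{(0)} = \tilde f_2$, the system collapses at $t=0$ to the strongly-separated self-similar IFS $\{z/4,\ z/4 + 1/2\}$, whose projection $\tilde\Pi_0(\bi)$ is a base-$4$ expansion in digits $\{0, \tfrac12\}$ and is therefore an injective function of the \emph{$3$-support} of $\bi$ (the set of positions at which $\bi$ has letter $3$). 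Hence when $\bi$ and $\bj$ have different $3$-supports, $\tilde\Pi_0(\bi) \ne \tilde\Pi_0(\bj)$ and the projections separate for $t$ near $0$. For the remaining subcase of identical $3$-support but distinct $\{1,2\}$-patterns, I would expand in $t$ using the identity $\tilde f_1^{(t)}(z) - \tilde f_2(z) = -tz^2/(4(tz+1))$: the matrix representation of a block $w \in \{1,2\}^l$ between consecutive $3$'s is $\tilde M_w^{(t)} = \bigl(\begin{smallmatrix} 1 & 0 \\ 4tS(w) & 4^l \end{smallmatrix}\bigr)$ with $S(w) = \sum_{k:\,w_k = 1} 4^{k-1}$ an injective base-$4$ encoding of $w$, so the $t$-linear perturbation of each block is uniquely recorded by $w$.

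The main obstacle is this last subcase, because the block-level injectivity of $S$ must be lifted to non-vanishing of a Taylor coefficient of $\tilde\Pi_t(\bi) - \tilde\Pi_t(\bj)$, and contributions from different blocks could in principle cancel. A cleaner algebraic route I would attempt is to observe that the identity $\Pi_t(\bi) \equiv \Pi_t(\bj)$ with first letters $1$ and $2$ forces $\Pi_t(\sigma\bi) \equiv \Pi_t(\sigma\bj)/(1 - \Pi_t(\sigma\bj))$ identically in $t$; iterating this and using that $\Pi_t(\sigma^m\bi) > 0$ for every $m$ and every $t > 0$ (because every tail $\sigma^m\bi$ still contains infinitely many $3$'s, since $\bi \in \Ak_n^\N$) should yield a contradiction with the leading-order expansion near $t = 0$.
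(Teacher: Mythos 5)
Your framework is genuinely different from the paper's and is partially sound: reducing via real-analyticity of $t\mapsto \Pi_t(\bi)-\Pi_t(\bj)$ to ``not identically zero on $(0,\infty)$,'' then disposing of the case where the first disagreement involves a $3$ (via $f_3^{(t)}(I_t)\cap(f_1(I_t)\cup f_2(I_t))=\emptyset$) and the case of distinct $3$-supports (via the rescaled degenerate limit at $t=0$) both work. But there is a genuine gap exactly where you flag it: the subcase of identical $3$-support and distinct $\{1,2\}$-patterns, which is unavoidable (e.g.\ for $n=2$ take $\bi=(13)(23)(33)(33)\cdots$ and $\bj=(23)(13)(33)(33)\cdots$). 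Neither of your proposed routes is carried out, and both face real obstructions. For the Taylor route, the first-order coefficient of $\tilde\Pi_t(\bi)-\tilde\Pi_t(\bj)$ at $t=0$ is
$-\tfrac14\sum_{k} 4^{-(k-1)}\,\tilde\Pi_0(\sigma^k\bi)^2\bigl(\ind[\bi_k=1]-\ind[\bj_k=1]\bigr)$
(using that $\tilde\Pi_0(\sigma^k\bi)=\tilde\Pi_0(\sigma^k\bj)$ when the $3$-supports agree); this is a signed sum that can a priori vanish, and the injectivity of $w\mapsto S(w)$ on single blocks does not rule out cross-block cancellation, so one would have to control all higher-order coefficients simultaneously. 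For the algebraic route, the identity $\Pi_t(\sigma\bi)=\Pi_t(\sigma\bj)/(1-\Pi_t(\sigma\bj))$ relates two tails that need not disagree at their first letters, so there is nothing to iterate; the only conclusion it yields, $\Pi_t(\sigma\bi)>\Pi_t(\sigma\bj)$, is not in tension with anything you establish, and no contradiction is actually derived.

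It is worth seeing how the paper resolves precisely this hard subcase, because it goes in the opposite direction from your $t\to 0$ limit. The paper first re-blocks to the alphabet $\{v3:\ v\in\{1,2\}^*,\ |v3|\le n\}$, so that non-degeneracy reduces (Lemma~\ref{lem1}) to finding, for each pair $v\ne w$, \emph{some} $t$ with $f_v([t/2,2t/3])\cap f_w([t/2,2t/3])=\emptyset$ --- a disjointness statement about finitely many explicit cylinder intervals, all images of $I_3^{(t)}=[t/2,2t/3]$, which is bounded away from the common fixed point $0$. For $|v|=|w|$ these cylinders are shown to separate for $t$ \emph{large} (Lemmas~\ref{lem2}--\ref{lem3}), exploiting that $f_1$ and $f_2$ genuinely differ away from $0$; for $|v|\ne|w|$ they separate for $t<3$ (Lemma~\ref{lem4}). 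Your small-$t$ regime is exactly where $f_1^{(t)}$ and $f_2$ collapse onto one another in rescaled coordinates, which is why it cannot distinguish sequences with the same $3$-support; if you want to complete your argument, the natural fix is to handle that subcase at large $t$ along the lines of Lemma~\ref{lem3} rather than by perturbation off $t=0$.
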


\begin{proof}
It will actually be more convenient to prove the claim for 
$$
\wt \Fk_t^{(n)} = \bigl\{f^{(t)}_u:\ u = v3,\ |u|\le n,\ v \in \{1,2\}^*\bigr\}.
$$
Showing non-degeneracy for $\wt\Fk_t^{(n)}$ for all $n$ is sufficient, since every word in the symbolic space corresponding to $\Fk_t^{(n)}$ can be written as a concatenation of
words of length $\le 2n-1$ having a single $3$ at the end.

Denote by $I^t_u$ the cylinder interval for $\Fk_t$ corresponding to $u\in \Ak^*$, that is, $I^t_u = f^{(t)}_u(I)$. The following lemma is immediate.

\begin{lem} \label{lem1}
Suppose that for all $v,w\in \{1,2\}^*,\ v\ne w$, of length $\le n-1$, there exists $t>0$ such that $I^t_{v3} \cap I^t_{w3} = \emptyset$, then $\wt\Fk_t^{(n)}$
is non-degenerate.
\end{lem}

Of course, the parameter $t$ will usually be different for different pairs of cylinder intervals.

\medskip

We will use the following notation: for intervals $[a,b], [c,d]\subset (0,+\infty)$ we will write
$$
[a,b] \precsim [c,d] \iff a<c\ \mbox{and} \ b<d;\ \ \ \ \ \ \ 
[a,b] \prec [c,d] \ \iff\ b < c.
$$
For a symbol $\alpha\in \Ak$ we write $\alpha^k = \overbrace{\alpha\ldots\alpha}^{k}$, with $\alpha^0$ being the empty word.
Introduce the lexicographic ordering on $\{1,2\}^k$ for every $k\in \N$, so that
$$
1^k < 21^{k-1} < 121^{k-2} < \cdots < 12^{k-1} < 2^k.
$$

\begin{lem} \label{lem2}
For every $k\in \N$, if $v<w$, with $v,w\in \{1,2\}^k$, then 
$$
f_v(x) < f_w(x)\ \ \mbox{for all}\ x>0,
$$
and hence $I_{v3}^{(t)} \precsim I_{w3}^{(t)}$.
\end{lem}

Recall that $f_1$ and $f_2$ do not depend on $t$ and $f_3^{(t)}(x) = x/4 + t/2$.

\begin{proof}[Proof of Lemma~\ref{lem2}] It suffices to prove the claim for consecutive $v<w$, and hence by the definition of the lexicographic order it is enough to check that
$$
f_{2^m1}(x) < f_{1^m 2}(x) \ \ \mbox{for all}\ x>0\ \ \mbox{and}\ \ m\ge 0.
$$
We have $f_{2^m}(x) = 4^{-m}x$. To compute $f_{1^m}$ note that $f_{1^m} = f^m_1 = f_{A^m}$. One can check by induction that
$$
A^m = 
\begin{pmatrix}
2^{-m} & 0 \\
\frac{2^{m+2}(1-4^{-m})}{3} & 2^m \\
\end{pmatrix},
$$
hence
\be \label{eq1}
f_{1^m}(x) = \frac{x}{4^m\bigl(1 + 4x(1-4^{-m})/3\bigr)}.
\ee
Thus,
$$
f_{2^m1}(x) = \frac{x}{4^{m+1}(1+x)} < \frac{x}{4^{m+1}\bigl(1 + x(1-4^{-m})/3\bigr)} = f_{1^m 2}(x) \ \ \mbox{for all}\ x>0,
$$
as desired.
\end{proof}

\begin{lem} \label{lem3}
For every $k\in \N$, if $v<w$, with $v,w\in \{1,2\}^k$, then there exists $T>0$ such that 
$$
I_{v3}^{(t)} \prec I_{w3}^{(t)}\ \ \mbox{for all}\ t\ge T.
$$
\end{lem}

\begin{proof}[Proof of Lemma~\ref{lem3}]
In view of Lemma~\ref{lem2}, it is enough to show the claim for consecutive $v<w$ in the lexicographic order, which means that there exists $u\in \{1,2\}^*$, possibly
empty, such that $v = 2^m 1 u$ and $w = 1^m 2 u$. Recall that $I = [0,2t/3]$, hence $I_3 = [t/2,2t/3]$. Let $a = f_u(t/2)$ and $b = f_u(2t/3)$, so that
$I_{u3} = [a,b]$.
Using the calculations in the proof of the last lemma, we obtain that $I_{v3}^{(t)} \prec I_{w3}^{(t)}$ is equivalent to
$$
\frac{b}{4^{m+1}(1+b)} < \frac{a}{4^{m+1}\bigl(1 + a(1-4^{-m})/3\bigr)},
$$
which is the hardest to achieve when $m=0$, when it reduces to 
\be \label{want1}
\frac{b}{b+1} < a \iff b-a < ab.
\ee
Now there are two cases. If $u=2^\ell$, we simply have $a = 4^{-\ell}t/2$ and $b = 4^{-\ell}2t/3$, then $b-a = 4^{-\ell}t/6 < 4^{-2\ell}t^2/3 = ab$ for $t > 4^\ell/2$.
The more difficult case is when 
$$
u = U12^\ell,\ \ \ U \in \{1,2\}^*
$$
($U$ may be empty). Note that $\lim_{t\to +\infty} f_{2^\ell}(t/2) = \infty$. On the other hand, $f_1(x) < 1/4$ for all $x>0$, and
$$
\lim_{x\to +\infty} f_1(x) = \lim_{x\to +\infty} \frac{x}{4x+4}= \frac{1}{4}\,.
$$
Thus, for any $\delta>0$ we can find $T>0$ sufficiently large such that for all $t\ge T$ holds
$$
\frac{1}{4} - \delta < a':= f_{12^\ell}(t/2) < f_{12^\ell}(2t/3) =:b' < \frac{1}{4}\,.
$$
Then
$$
b-a = f_u(2t/3) - f_u(t/2) = f_U(b') - f_U(a').
$$
Notice that $f_1(x) > x/5$ for $x\in (0,1/4)$, and of course, $f_2(x) = x/4>x/5$ for $x>0$, hence
$$
b> 5^{-|U|}b',\ \ a> 5^{-|U|}a' \implies ab > 5^{-2|U|}a'b' > 5^{-2|U|-2},
$$
assuming $\delta < 1/20$. By the continuity of $f_U$, we can choose $\delta \in (0,1/20)$ such that 
$$
b-a = f_U(b') - f_U(a') < 5^{-2|U|-2} < ab,
$$
achieving \eqref{want1}. The lemma is proved.
\end{proof}

\begin{lem} \label{lem4}
For every $k\in \N$, if $v\in \{1,2\}^{k+1},\ w\in \{1,2\}^k$, then 
$$
I_{v3}^{(t)} \prec I_{w3}^{(t)}\ \ \mbox{for all}\ t\in (0, 3).
$$
\end{lem}

\begin{proof}[Proof of Lemma~\ref{lem4}]
In view of Lemma~\ref{lem2}, it is enough to consider $v = 2^{k+1}$ and $u = 1^k$. By \eqref{eq1}, 
$$
I_{2^{k+1}3}^{(t)} \prec I_{1^k3}^{(t)} \iff \frac{2t}{3\cdot 4^{k+1}} < \frac{t/2}{4^k\bigl(1 + 2t(1-4^{-k})/3\bigr)}\,,
$$
which is equivalent to
$$
t < \frac{3}{1-4^{-k}}\,.
$$
\end{proof}
Combining Lemmas~\ref{lem1}, \ref{lem3}, and \ref{lem4}, yields the proposition.
\end{proof}

\begin{prop}\label{prop:confdim} We have
$\dim_{\rm conf}( \Fk_t^{(n)})\to \dim_{\rm conf}( \Fk_t)$, as $n\to \infty$.
\end{prop}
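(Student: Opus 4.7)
The plan is to control the pressure of $\Fk_t^{(n)}$ at $s = s_\infty - \eps$ via its 1-step partition function
$$\Psi_n(s) := \sum_{u \in \Ik^{(n)}} \|f_u'\|^s = Z_n(s) - \wtil Z_n(s),$$
with $Z_n(s) := \sum_{u\in\{1,2,3\}^n}\|f_u'\|^s$ and $\wtil Z_n(s) := \sum_{u\in\{1,2\}^n}\|f_u'\|^s$. Set $s_\infty := \dim_{\rm conf}(\Fk_t)$, $s_n := \dim_{\rm conf}(\Fk_t^{(n)})$. The upper bound $s_n \le s_\infty$ is immediate, as $\Fk_t^{(n)}$ is a sub-IFS of the $n$-th iterate $(\Fk_t)^n$, whose conformal dimension is $s_\infty$. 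For the reverse, bounded distortion yields a constant $K$ independent of $n$ with $\|f_{UV}'\| \ge K^{-1}\|f_U'\|\|f_V'\|$, and iterating gives
$$P_{\Fk_t^{(n)}}(s)\ \ge\ \log \Psi_n(s) - s\log K;$$
it thus suffices to show $\Psi_n(s) \to \infty$ as $n\to\infty$, which forces $s_n > s = s_\infty - \eps$ for all large $n$.

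Two direct computations drive the estimate. Since $f_1,f_2$ share the fixed point $0$ with $f_1'(0)=f_2'(0) = 1/4$, and since $f_1'(y) = 1/(4(y+1)^2) < 1/4$ for $y > 0$, the chain rule gives $\|f_u'\|_{I_t} = f_u'(0) = 4^{-n}$ for every $u \in \{1,2\}^n$, so $\wtil Z_n(s) = 2^{n(1-2s)}$ exactly. The same analysis shows that any $u \in \{1,2,3\}^n$ whose $3$'s all precede its $1$'s satisfies $\|f_u'\|_{I_t} = 4^{-n}$: evaluating at $x = 0$, the intermediate point $y_i = f_{u_{i+1}\cdots u_n}(0)$ equals $0$ whenever $u_i = 1$ (as symbols $1,2$ fix $0$), so $f_{u_i}'(y_i) = 1/4$ at every position. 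A short count (fix the position $j$ of the last $3$ and freely choose $\{2,3\}$-letters to its left and $\{1,2\}$-letters to its right) yields $n\cdot 2^{n-1}$ such words containing at least one $3$, and consequently $\Psi_n(s) \ge n\cdot 2^{n-1}\cdot 4^{-ns}$. At $s = 1/2$ this reads $\Psi_n(1/2) \ge n/2 \to \infty$, so $P_{\Fk_t^{(n)}}(1/2) > 0$ for large $n$, giving $s_n > 1/2$ and hence $s_\infty > 1/2$.

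With $s_\infty > 1/2$ established, restrict $\eps < s_\infty - 1/2$ so that $s > 1/2$. Then $\wtil Z_n(s) = 2^{n(1-2s)} \to 0$ exponentially, while the standard pair of bounded-distortion inequalities (the sub- and supermultiplicative estimates for $Z_n(s)$) yields $Z_n(s) \asymp e^{nP_{\Fk_t}(s)}$ with $P_{\Fk_t}(s) > 0$. Thus $\Psi_n(s) = Z_n(s) - \wtil Z_n(s) \to \infty$ exponentially, as required.

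The main obstacle is the threshold $s = 1/2$: the sub-IFS $\{f_2, f_3^{(t)}\}$ satisfies the open set condition with conformal dimension exactly $1/2$, so a priori $s_\infty$ could equal $1/2$, at which point the clean exponential dominance $Z_n(s) \gg \wtil Z_n(s)$ breaks down. The combinatorial identification of $n \cdot 2^{n-1}$ "good" words attaining the maximal derivative $4^{-n}$ --- made possible by the peculiar fact that $f_1'$ attains its supremum precisely at the common fixed point of $f_1$ and $f_2$ --- is what bootstraps the argument past this threshold.
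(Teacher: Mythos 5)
Your argument is correct, and its core mechanism is the same as the paper's: every $u\in\{1,2\}^n$ satisfies $\|f_u'\|=4^{-n}$ (the maximal derivative is attained at the common fixed point $0$), so the words removed in passing from $(\Fk_t)^n$ to $\Fk_t^{(n)}$ contribute exactly $2^{n(1-2s)}$ to the level-$n$ partition function at exponent $s$, which is exponentially negligible precisely because $s_\infty>\tfrac12$; bounded distortion then converts the surviving one-step mass into a lower bound for $\dim_{\rm conf}(\Fk_t^{(n)})$. (The paper runs this at the exponent $d_N$ defined by $\sum_{u\in\Ak^N}\|f_u'\|^{d_N}=1$ and extracts the quantitative bound $\eps+O(1/N)$; you run it at $s_\infty-\eps$ using $Z_n(s)\ge e^{nP_{\Fk_t}(s)}$ --- same content.) Where you genuinely add something is the threshold fact $s_\infty>\tfrac12$. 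The paper asserts this is ``clear'' because $\{f_2,f_3^{(t)}\}$ is a ratio-$\tfrac14$ self-similar system, but as you correctly observe, that observation only yields $d_N>\tfrac12$ for each $N$, hence $s_\infty\ge\tfrac12$ in the limit; strictness needs an extra input, since a sub-exponential surplus of words does not by itself make the pressure at $\tfrac12$ positive. Your count of the $n\cdot 2^{n-1}$ words with all $3$'s preceding all $1$'s, each attaining $\|f_u'\|=4^{-n}$ (the intermediate points at positions carrying a $1$ all equal $0$), gives $\Psi_n(1/2)\ge n/2$, which eventually beats the distortion constant $K^{1/2}$ and forces $P_{\Fk_t^{(n)}}(1/2)>0$, hence $s_n>\tfrac12$ and $s_\infty>\tfrac12$. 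This is a clean and complete way to supply the one step the paper leaves implicit; the rest of your proof matches the paper's.
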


This is standard, but we provide the proof for the reader's convenience.
Before the proof, we recall some general facts about the conformal dimension for hyperbolic IFS on the line. Let $\Ik$ be a finite alphabet, and let
$\Phi=\{\phi_i\}_{i\in \Ik}$ be a $C^{1+\theta}$-smooth hyperbolic IFS on a compact interval $J\subset \R$, that is, there exist $0 < \gam_1 < \gam_2 < 1$ such that 
$$
0 < \gam_1 \le |\phi_i'(x)| \le \gam_2 < 1\ \ \ \mbox{for all}\ i\in \Ik\ \ \mbox{and}\ x\in J.
$$
Let $d_n>0$ be such that
$$
\sum_{u\in \Ik^n} \|\phi'_u\|^{d_n}=1,
$$
where we use the sup-norm on $J$. It is well-known (see, e.g., \cite[Chapter 5]{Falconer_Tech}) that 
\be \label{eq:dimconf}
\dim_{\rm conf}( \Gk) = \lim_{n\to \infty} d_n,
\ee
and moreover,
$$
|\dim_{\rm conf}( \Phi) - d_n| \le O(1)\cdot n^{-1},
$$
where the constant depends only on the IFS. In fact, by the Chain Rule and the Principle of Bounded Distortion, there exists $C>1$, depending only on the IFS, such that
for all $\bi,\bj\in \Ik^*$ holds
\be \label{bdp}
C^{-1} \|\phi_\bi'\|\cdot \|\phi'_\bj\| \le \|\phi'_{\bi\bj}\| \le \|\phi_\bi'\|\cdot \|\phi'_\bj\|.
\ee
From the upper bound in \eqref{bdp} we  obtain $d_n \ge d_{2n} \ge \ldots \ge \dim_{\rm conf}( \Phi)$, and from the lower bound we get
$$
1 = \sum_{\bi,\bj\in \Ik^n} \|\phi'_{\bi\bj}\|^{d_{2n}} \ge C^{-1} \Bigl(\sum_{\bi\in \Ik^n} \|\phi'_\bi\|^{d_{2n}}\Bigr)^2 \implies
\sum_{\bi\in \Ik^n} \|\phi'_\bi\|^{d_{2n}} \le C^{1/2}.
$$
Then
$$
\sum_{\bi\in \Ik^n} \|\phi'_\bi\|^{d_{2n}+\delta} \le 1,\ \ \ \mbox{where}\ \ \gam_2^{\delta n} = C^{-1/2}, \ \ \delta = \frac{\log C}{2n\log (1/\gam_2)}\,,
$$
and so $d_n \le d_{2n} + \delta$. Repeating this argument we obtain
$$
d_{2^m n} \ge d_n - \frac{\log C}{2n\log (1/\gam_2)}\cdot \Bigl[n^{-1} + (2n)^{-1} + \cdots + (2^{m-1} n)^{-1}\Bigr] \ge  d_n - \frac{\log C}{n\log (1/\gam_2)}\,,
$$
hence
\be \label{eq:est1}
d_n - \frac{\log C}{n\log (1/\gam_2)} \le \dim_{\rm conf}( \Phi) \le d_n\ \ \ \mbox{for all}\ n\in \N.
\ee

\begin{proof}[Proof of Proposition~\ref{prop:confdim}] Recall that $\Ak = \{1,2,3\}$ and $\Fk_t = \{f_1,f_2,f_3^{(t)}\}$ (now the parameter $t$ is fixed).
For $N\in \N$ let $d_N>0$ be such that
$$
\sum_{u\in \Ak^N} \|(f_u^{(t)})'\|^{d_N} = 1,
$$
so that
$$
s:= \dim_{\rm conf}(\Fk_t) = \lim_{N\to \infty} d_N.
$$
It is clear that $s\in (\half,1)$, since $f_2$ and $f_3^{(t)}$ are similitudes of contraction ratio $1/4$, and hence already 
$$\sum_{u\in \{2,3\}^n} 
\bigl\|\bigl(f_u^{(t)}\bigr)'\bigr\|^{1/2} = 1.
$$
Fix $\eps>0$ and let $N\in \N$ be such that
$$
d_N \in [s, s+\eps).
$$

Recall that we consider the IFS $\Fk_t^{(N)}$, for $N\in \N$, defined in \eqref{def:ftn}. Let $\wt \Ak^{(N)}:= \Ak^N \setminus \{1,2\}^N$, so that
$
\Fk_t^{(N)} = \bigl\{f_u:\ u\in \wt \Ak^{(N)}\bigr\}.
$
It is immediate that $\bigl\|\bigl(f_u^{(t)}\bigr)'\bigr\|\le 4^{-|u|}$ for all $u$, hence  we have
$$
\sum_{u\in \wt\Ak^N} \|(f_u^{(t)})'\|^{d_N}  \ = \!\sum_{u\in \Ak^N\setminus \{1,2\}^N} \|(f_u^{(t)})'\|^{d_N} \ge 1 - 2^N (4^{-N})^{d_N} \ge 1 - 4^{-N\eps}\ge 1/2,
$$
for $N$ sufficiently large, and this implies
\be \label{eq:est2}
\sum_{u\in \wt\Ak^N} \|(f_u^{(t)})'\|^{d_N-(2N)^{-1}} \ge 1.
\ee
Let $s_n^{(N)}>0$ be such that
$$
\sum_{\bi\in (\wt\Ak^N)^n} \|(f_\bi^{(t)})'\|^{s_n^{(N)}}=1,
$$
so that $\dim_{\rm conf}(\Fk_t^{(N)}) = \lim_{n\to \infty} s_n^{(N)}$ by \eqref{eq:dimconf}, and \eqref{eq:est2} yields 
$$
s_1^{(N)} \ge d_N-(2N)^{-1}.
$$
 On the other hand, applying the inequality \eqref{eq:est1} for  $\Fk_t^{(N)}$, with $n=1$, we obtain
$$
s_1^{(N)} - \frac{\log C}{\log(4^N)} \le \dim_{\rm conf}( \Fk_t^{(N)}) \le s_1^{(N)},
$$
since $\max_{\bi \in \wt \Ak^{(N)}}  \|(f_\bi^{(t)})'\| = 4^{-N}$. Combining everything, we get that
$$
\bigl|\dim_{\rm conf}( \Fk_t^{(N)}) - \dim_{\rm conf}( \Fk_t)\bigr| \le \eps + \frac{1}{2N} + \frac{\log C}{N\log 4}\,,
$$
which yields the desired claim.
\end{proof}

\begin{proof}[Proof of Theorem~\ref{th:main}]
We only need to show the lower bound for $\dim(\Lam_t)$.
We have $\dim(\Lam_t)\ge \dim(\Lam_t^{(n)}$ for all $n\in \N$, where $\Lam_t^{(n)}$ is the attractor of the IFS $\Fk_t^{(n)}$, defined in \eqref{def:ftn}.
It was shown in Proposition~\ref{prop:nondegen} that $\Fk_t^{(n)}$ is non-degenerate on some interval, hence on  the entire $(0,+\infty)$.
In order to apply Theorem~\ref{th2} we need to restrict the IFS and the parameter set to a fixed bounded interval. We can fix $J=[t_0,t_1]\subset (0,\infty)$
arbitrarily and consider $\Fk_t^{(n)}$ as an IFS on $[-\eps,2t_1/3]$ for a small $\eps>0$.
It is a real-analytic family of IFS, depending real-analytically on  $t\in J$. By Theorem~\ref{th2}, it satisfies the SESC on $\{0\}$ for $t\in J\setminus \Ek_n$, with
$\Dh(\Ek_n)=0$, and then we can apply Theorem~\ref{th1} to conclude that $\dim(\Lam_t) = \dim_{\rm conf}( \Fk_t^{(n)})$ for $t\in J\setminus \Ek_n$
(here we use also that $\dim_{\rm conf}( \Fk_t^{(n)})<1$, since it is the conformal dimension of an IFS obtained my removing some maps from the
$n$-th interate of $\Fk_t$, and the latter has conformal dimension $\le \log 3/\log 4$).
The proof is concluded by an application of Proposition~\ref{prop:confdim}.
\end{proof}


\section{Concluding remarks}

\subsection{On the dimension of the natural measure}
The following remark is due to Bal\'azs B\'ar\'any. 
For a conformal IFS $\Phi = \{\phi_i\}_{i\in \Ik}$, consider 
the Gibbs measure $\nu_\Phi$ on the
symbolic space $\Ik^\N$ corresponding to the potential
$$
\bi\mapsto s\cdot \log|\phi_{i_1}'(\Pi_\Phi(\sig \bi))|,
$$
where $s=s(\Phi)$ is the conformal dimension of $\Phi$ and $\Pi_\Phi$ is the natural projection. This Gibbs measure on $\Ik^\N$ satisfies
$$
C^{-1}|I_{\bi|_n}|^s \le \nu_\Phi([\bi|_n]) \le C|I_{\bi|_n}|^s,
$$
for some constant $C>1$, for {\em every} $\bi\in \Ik^\N$, see, e.g., \cite[Chapter 9]{PUbook}. The push-forward of $\nu_\Phi$, that is, $\mu_\Phi=\nu_\Phi \circ \Pi_\Phi^{-1}$, 
is called the {\em natural measure} for $\Phi$.
If $\Phi$ satisfies the Strong Separation Condition, the local dimension of $\mu_\Phi$ is equal to $s$ at every point, and hence, the $L^q$-dimension if equal to $s$ for all $q\ge 1$ (see, e.g.,
\cite[Section 9.2]{PUbook}, \cite[Chapter 11]{Falconer_Tech} or \cite[Section 2.6]{BSSbook} for definitions). In our example, for the IFS $\Fk_t$, for any $t>0$, the local dimension of the
natural measure is strictly less than the conformal dimension $s(\Fk_t)$ at the common fixed point 0, since the number of cylinder intervals of level $n$ containing $0$, all of comparable length, 
grows exponentially with $n$. This implies, by a lemma of Shmerkin \cite[Lemma 1.7]{Shmerkin}, that there exists $q\in (1,\infty)$ such that the $L^q$-dimension of $\mu_\Phi$ is strictly less than
the conformal dimension $s(\Phi)$.

\subsection{Questions}
\begin{enumerate}
\item[{\bf 1.}] Is there $t>0$ such that $\Fk_t$ satisfies the strong exponential separation condition?
\item[{\bf 2.}] Is there an algebraic $t>0$ such that the semigroup $\{A,B,C_t\}^+$ is free? 
(Note that the positive answer for Question 2 implies the positive answer for Question 1.)
\item[{\bf 3.}] In the cases when the Hausdorff dimension of the attractor of $\Fk_t$ is equal to the conformal dimension, is this also the Hausdorff dimension of the natural measure?
\end{enumerate}


\medskip

\section{Appendix: on the freeness of the semigroup $\{A,B\}^+$}

Here we show the proof of the freeness, following \cite{CHK99}.
Recall that 
$A = \begin{pmatrix}
\half & 0 \\
2 & 2 \\
\end{pmatrix},\ 
B = \begin{pmatrix}
\half & 0 \\
0 &  2 \\
\end{pmatrix}.$
Let $R = \begin{pmatrix} 4 & 0 \\ 4/3 & 1 \\ \end{pmatrix}.$ The freeness of $\{A,B\}^+$ is equivalent to the freeness of 
$$
\left\{ 2RA^{-1}R^{-1},2RB^{-1}R^{-1}\right\}^+ = \left\{\begin{pmatrix}
4 & 0 \\
0 & 1 \\
\end{pmatrix},\ 
\begin{pmatrix}
4 & 0 \\
1 &  1 \\
\end{pmatrix}\right\}^+=: \{E,F\}^+.
$$
Suppose that there exist matrices $X,Y\in  \{E,F\}^+$ such that $XE = YF$. Then
$$
X = \begin{pmatrix} x_1 & 0 \\ x_2 & 1\end{pmatrix},\ \ \ Y = \begin{pmatrix} y_1 & 0 \\ y_2 & 1\end{pmatrix},\ \ \mbox{with}\ \ x_1,x_2, y_1, y_2\in \Z,
$$
whence
$$
4x_2 = XE(2,1) = YF(2,1) = 4y_2 + 1,
$$
which is a contradiction.

\bigskip

\noindent {\bf Acknowledgment.} I am grateful to Bal\'azs B\'ara\'ny for sharing the question with me, for helpful discussions, and especially for his remark in Section 3.

\medskip

\bibliographystyle{plain}

\bibliography{Solomyak_Takahashi}

\end{document}